\documentclass{article}
\usepackage{graphicx} 
\usepackage{amsfonts,amsmath,amssymb,amsthm}
\usepackage{tikz}
\usetikzlibrary{calc}

\usepackage[a4paper,margin=2cm]{geometry}

\theoremstyle{plain}
\newtheorem{theorem}{Theorem}[section]
\newtheorem{lemma}[theorem]{Lemma}
\newtheorem{proposition}[theorem]{Proposition}
\newtheorem{corollary}[theorem]{Corollary}

\theoremstyle{definition}
\newtheorem{definition}[theorem]{Definition}

\theoremstyle{remark}
\newtheorem{remark}[theorem]{Remark}

\title{Dynamics of the Longest-Edge Altitude Bisection Algorithm}
\author{Jérôme Michaud and Sergey Korotov}
\date{\today}

\begin{document}

\maketitle
\begin{center}
Department of Business and Mathematics, IEM, Mälardalen University, Västerås, Sweden \newline
emails: jerome.michaud@mdu.se,  sergey.korotov@mdu.se 
\end{center}

\begin{abstract}
We study a longest-edge based refinement scheme for triangulations, termed the longest-edge altitude bisection (LEAB), in which each triangle is subdivided by dropping the altitude from the vertex opposite to its longest edge. Using the normalized shape space of triangles introduced by Perdomo and Plaza in: Properties of triangulations obtained by the longest-edge bisection. \emph{Cent. Eur. J. Math.}, 12(12) (2014), 1796-1810, we show that LEAB admits a simple geometric description: the normalized left and right children of a triangle  in focus are obtained by intersecting the geodesic of right triangles with rays issued from the endpoints of the longest edge and explicit formulas for the mappings are derived. This characterization implies  an interesting observation that the associated refinement dynamics collapse the entire shape space onto the right-triangle geodesic in a single step and that every point on this geodesic is fixed. Two-sided bounds for the contraction of the mesh  size (discretization parameter) are derived. Also, applications and limitations of the method are briefly discussed.
\end{abstract}

\medskip

\noindent{\bf Kewords:} longest-edge altitude bisection, mesh refinement, refinement dynamics, normalization space, two-sided bounds

\medskip

\noindent{\bf Mathematics Subject Classification:} 65N50, 65M50

\section{Introduction}
Longest-edge based refinement strategies play a central role in the adaptive refinement of triangular meshes. Among these, classical longest-edge bisection (LEB), in which a triangle is subdivided by joining the midpoint of its longest edge to the opposite vertex, has been extensively studied and is known to preserve shape regularity under repeated refinements \cite{RosenbergStenger1975,Rivara1984,Stynes1980}. Beyond its practical relevance, LEB has also attracted attention from a geometric and dynamical perspective. In particular, Perdomo and Plaza \cite{PerdomoPlaza2014} introduced a normalized shape space of triangles endowed with a hyperbolic metric and showed that LEB induces a non-expanding dynamical system with finitely many similarity classes.

In this work, we study a closely related but structurally simpler refinement strategy, which we call the \emph{longest-edge altitude bisection} (LEAB). Given a triangle, LEAB subdivides it by dropping the altitude from the vertex opposite to its longest edge,   thus producing two subtriangles. This construction is purely geometric and does not rely on any midpoint computations. Following the normalization framework of Perdomo and Plaza \cite{PerdomoPlaza2012,PerdomoPlaza2014,PerdomoPlaza2013}, we analyze LEAB directly in the space of triangle shapes modulo similarity. In this setting, each triangle shape is represented by a point in a bounded region~$\Sigma$ of the complex upper half-plane, while right triangles correspond to a distinguished hyperbolic geodesic~$\Gamma \subset \Sigma$. Within this framework, LEAB admits an especially transparent geometric description.

Our main result shows that the action of LEAB on shape space is completely determined by a simple ray–geodesic intersection property. Given $z\in\Sigma$ representing a triangle, the normalized left and right children obtained by LEAB are given, up to the normalization symmetry, by the intersections of the geodesic~$\Gamma$ with the rays issued from $0$ and $1$, respectively, and passing through~$z$. This geometric characterization implies that the LEAB dynamical system collapses the entire shape space~$\Sigma$ onto the geodesic~$\Gamma$ in a single refinement step, and that every point of~$\Gamma$ is a fixed point of the refinement.

The paper is organized as follows. In Section~2 we recall the normalized shape space and describe the geometry of the LEAB refinement in this setting, establishing the ray–geodesic intersection property and deriving explicit formulas for the associated refinement maps. The consequences of this geometric collapse, as well as practical considerations such as mesh heterogeneity and conformity, are discussed subsequently.

\section{Geometry of the longest-edge altitude bisection in the shape space}

\begin{definition}[LEAB]
    Given a triangle $T$, the longest-edge altitude bisection subdivides $T$ by dropping the altitude from the vertex opposite its longest edge. The triangle is thus subdivided into two children, which we refer to as the left and right children according to whether the subtriangle shares the left or right endpoint of the longest edge.
\end{definition}

Following \cite{PerdomoPlaza2014}, we study this refinement process as a dynamical system of similarity classes of triangles. Every similarity class of nondegenerate triangles can be represented by a unique point
\[
z \in \Sigma := \{\, z \in \mathbb C : 0 < \Re(z) \le \tfrac12,\ \Im(z)>0,\ |z-1|\le 1 \,\},
\]
obtained after normalizing the triangle so that its longest edge coincides with the unit segment $[0,1]$, the third vertex lies in the upper half–plane, and the shortest edge is attached to~$0$.
The space $\Sigma$ is endowed with the Poincaré metric of the upper half–plane \cite{anderson2005hyperbolic}.

Within this normalized shape space, the right triangles are represented by the hyperbolic geodesic \cite{PerdomoPlaza2014}
\[
\Gamma := \{\, z \in \Sigma : |z-\tfrac12| = \tfrac12 \,\}.
\]

\begin{figure}
    \centering
    \begin{tikzpicture}[scale=7,>=stealth]



\draw[thick] (0,0) -- (0.5,0);


\draw[thick,red,dashed] (0,0) -- (0.25,0.125);

\draw[thick,red,dashed] (0.25,0.125) -- (1,0);
\draw[thick,red,dashed] (1,0) -- (0,0);
\draw[thick,green] (0.25,0) -- (0.25,0.125);
\draw[thick] (0.5,0) -- (0.5,0.867);

\draw[thick,domain=120:180,samples=100]
  plot ({1+cos(\x)}, {sin(\x)});

\node at (0.28,0.78) {$\Sigma$};


\draw[very thick,blue,domain=90:180,samples=80]
  plot ({0.5+0.5*cos(\x)}, {0.5*sin(\x)});

  \draw[very thick,blue,dotted,domain=0:90,samples=80]
  plot ({0.5+0.5*cos(\x)}, {0.5*sin(\x)});

\node[blue] at (0.45,0.55) {$\Gamma$};

\filldraw (0.25,0.125) circle (0.010)
  node[above right] {$z$};

\filldraw[blue] (0.2,0.4) circle (0.009)
  node[below right] {$W_L(z)$};

\filldraw[blue] (0.8,0.4) circle (0.009);

\draw[dotted, thick] (0.2,0.4) -- (0.8,0.4);
\draw[dotted, thick] (0.25,0.125) -- (0.8,0.4);

\filldraw[blue] (0.027027,0.162162) circle (0.009)
  node[left] {$W_R(z)$};

\draw[->,thick] (0.25,0.125) -- (0.2,0.4);
\draw[->,thick] (0.25,0.125) -- (0.027027,0.162162);

\end{tikzpicture}
    \caption{Illustration of the LEAB process. Triangle $z$ (shown in red) is divided into two children triangle by the altitude (green). The normalized left and right children belong to  the geodesic $\Gamma$ in the space of right triangles  (marked blue). The rays issued from $0$ and $1$ are also shown to illustrate how the symmetry around $\Re z$ may be needed to find the correct normalization.
    }
    \label{fig:LEAB-geometry}
\end{figure}

Applying LEAB to a normalized triangle $z$ produces two right triangles. After renormalization, these define two maps
\[
W_L,\, W_R : \Sigma \longrightarrow \Sigma,
\]
corresponding to the left and right children. 

The key geometric observation underlying the LEAB dynamics is that these images can be characterized purely in terms of ray–geodesic intersection.

\begin{theorem}
Let $z \in \Sigma$.
\begin{enumerate}
\item The normalized left child $W_L(z)$ is given by the intersection of the geodesic $\Gamma$ with the ray issued from $0$ and passing through $z$, up to the normalization symmetry imposed by the Perdomo–Plaza convention.
\item The normalized right child $W_R(z)$ is given by the intersection of $\Gamma$ with the ray issued from $1$ and passing through $z$, up to the same symmetry.
\item The formulas for $W_L(z)$ and $W_R(z)$ are given by:
\[
W_L(z)=
\begin{cases}
\dfrac{z+\bar z}{2z}, & \Re(z) \le \Im(z),\\[3ex]
\dfrac{z-\bar z}{2z}, & \Re(z) > \Im(z),
\end{cases}
\qquad
W_R(z)=
\begin{cases}
\dfrac{z+\bar z-2}{2(z-1)}, & 1-\Re(z) \le \Im(z),\\[3ex]
\dfrac{z-\bar z}{2(z-1)}, & 1-\Re(z) > \Im(z).
\end{cases}
\]
\end{enumerate}
\end{theorem}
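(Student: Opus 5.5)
Place the triangle in the normalized position, with vertices $0$, $1$ and $z=x+iy$, where $0<x\le\tfrac12$ and $y>0$. Since $z\in\Sigma$, the longest edge is $[0,1]$, so LEAB drops the altitude from $z$. Its foot is the real point $x$, which lies in $[0,1]$ because both base angles are acute. The left child is the triangle $T_L=(0,x,z)$ with a right angle at $x$, and the right child is $T_R=(x,1,z)$ with a right angle at $x$. The longest edge of each child is its hypotenuse: $[0,z]$ for $T_L$ and $[z,1]$ for $T_R$. I will normalize each child by an explicit orientation-preserving similarity that sends its hypotenuse to $[0,1]$, and then read off the image of the right-angle vertex $x$.

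For $T_L$ there are two candidate similarities, according to which endpoint of the hypotenuse goes to $0$. The first is $w\mapsto w/z$, which fixes $0$. The second is $w\mapsto 1-w/z$, which sends $0\mapsto 1$ and $z\mapsto 0$. The first sends $x$ to $x/z=(z+\bar z)/(2z)$. The second sends $x$ to $1-x/z=iy/z=(z-\bar z)/(2z)$. A direct computation gives $\Im(iy/z)=y^2/|z|^2>0$, so the second image lies in the upper half-plane. The first image $x/z$ lies in the lower half-plane, and its reflection $x/\bar z = xz/|z|^2$ is a positive real multiple of $z$. This proves the ray statement in part (i): the normalized apex lies on the ray from $0$ through $z$. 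It also lies on $\Gamma$, because the child has a right angle at the apex, so the apex lies on the Thales circle $|w-\tfrac12|=\tfrac12$. Equivalently, one checks that $|xz/|z|^2-\tfrac12|=\tfrac12$, using $x^2+y^2=|z|^2$.

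Part (i) is then completed by the case split. The Perdomo--Plaza convention requires the real part of the normalized apex to be at most $\tfrac12$, meaning the shortest edge is attached to $0$. The real part of $xz/|z|^2$ is $x^2/(x^2+y^2)$, which is at most $\tfrac12$ exactly when $x\le y$, that is, when $\Re z\le\Im z$. In the other case one uses the reflection $w\mapsto 1-\bar w$ across $\Re w=\tfrac12$, which yields the second formula $(z-\bar z)/(2z)$. This reflection is the ``normalization symmetry'' referred to in the statement.

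The obstacle I expect is bookkeeping of orientation and conjugation. The literal first-case formula $(z+\bar z)/(2z)=x/z$ has negative imaginary part, so it must be read modulo complex conjugation, i.e.\ modulo the reflection that the normalization symmetry permits. I would state this convention explicitly at the start of the proof, so that both cases are unambiguous.

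The right child is handled identically after conjugating by the symmetry $w\mapsto 1-\bar w$, which exchanges the roles of the vertices $0$ and $1$. The relevant similarities are $w\mapsto (w-1)/(z-1)$, which sends $x$ to $(x-1)/(z-1)=(z+\bar z-2)/(2(z-1))$, and $w\mapsto 1-(w-1)/(z-1)=(z-w)/(z-1)$, which sends $x$ to $iy/(z-1)=(z-\bar z)/(2(z-1))$. The ray-through-$1$ property holds because $(x-1)/(\bar z-1)$ is a positive multiple of $z-1$ after the shift $w\mapsto w+1$. The threshold becomes $(1-x)^2\le y^2$, i.e.\ $1-\Re z\le\Im z$, which gives part (ii) and the $W_R$ half of part (iii).
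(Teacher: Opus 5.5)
Your route differs from the paper's. The paper argues synthetically. The left children of all points on the ray $\{tz: t>0\}$ are homothetic, and at the point $tz\in\Gamma$ the altitude child of the right triangle $(0,1,tz)$ is similar to that triangle itself (a step the paper leaves implicit). Hence $W_L(z)$ is the ray--geodesic intersection up to symmetry, and the formulas are left to ``straightforward computations''. That explains conceptually why rays from the endpoints appear.

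You instead write down the normalizing similarities, compute the image of the foot $x$, and check ray membership, Thales-circle membership and the threshold directly. Parts (i) and (iii) thus come out of one computation that actually derives the formulas. Your left-child analysis is correct. For $\Re z\le\Im z$ the normalized apex is $xz/|z|^2$, the conjugate of the stated $(z+\bar z)/(2z)=x/z$, which indeed lies in the lower half-plane, so you are right to flag it. One small slip: $\Im(iy/z)=xy/|z|^2$; the quantity $y^2/|z|^2$ is its real part.

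The right-child paragraph, however, is not ``identical'', and two claims in it are wrong as written.

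First, $(x-1)/(\bar z-1)=\frac{x-1}{|z-1|^2}(z-1)$ is a \emph{negative} multiple of $z-1$ because $x<1$. Adding $1$ does not put it on the ray from $1$ through $z$, nor on $\Gamma$. The correct relation is that the ray point is $P=1+\frac{1-x}{|z-1|^2}(z-1)=1-\overline{(x-1)/(z-1)}$. So the first-case formula $(x-1)/(z-1)$ is the mirror image $1-\bar P$ of the ray point, needed precisely when $\Re P\ge\tfrac12$, i.e.\ $1-x\le y$. This is the reverse of the left child, where in the first case $W_L(z)$ is the ray point itself.

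Second, your map $w\mapsto(z-w)/(z-1)$ sends the apex to $iy/(z-1)$, whose imaginary part is $-(1-x)y/|z-1|^2<0$. So the second-case formula $(z-\bar z)/(2(z-1))$ has the same conjugation defect you found for $W_L$, and the normalized right child in that case is $iy/(1-\bar z)=P$. The paper's own figure confirms this: for $z=0.25+0.125i$ the plotted $W_R(z)$ is $0.027+0.162i$, whereas $iy/(z-1)\approx 0.027-0.162i$.

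Both problems disappear if you actually carry out the conjugation by $w\mapsto 1-\bar w$ that you propose, i.e.\ use $W_R(z)=W_L(1-\bar z)$. This is legitimate because $(0,1,1-\bar z)$ still has $[0,1]$ as its longest edge and LEAB commutes with this reflection. The threshold then becomes $1-x\le y$, and the two cases give $(x-1)/(z-1)$ and $iy/(1-\bar z)$.
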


\begin{proof}
We treat the left child; the argument for the right child is identical.

The (unnormalized) left child of $z$ is the right triangle with vertices
$\{\,0,\,\Re(z),\,z\,\}$.

All left children corresponding to points lying on the ray $\{\,t z : t>0\,\}$ are similar to each other, since they differ only by a homothetic scaling along that ray.

On the other hand, the normalization of a right triangle corresponds to a point of $\Gamma$, and right triangles represented by points on $\Gamma$ are similar to their normalized representatives.
Consequently, the normalized left child of $T(z)$ must lie on $\Gamma$ and belong to the same similarity class as the triangle represented by the ray through $z$.

This uniquely determines $W_L(z)$ as the intersection of $\Gamma$ with the ray from $0$ through $z$, up to the symmetry imposed to ensure that the shortest edge is attached to~$0$ in the normalization.
The same argument applies to the right child, replacing the ray from $0$ by the ray from $1$.

The formulas for $W_L(z)$ and $W_R(z)$ are obtained by straightforward computations. The domain of these maps is illustrated in Figure \ref{fig:LEAB-maps}.
\end{proof}

\begin{remark}
    This geometric description immediately implies that the LEAB dynamics collapses the entire shape space onto the geodesic $\Gamma$ in a single refinement step, as illustrated in Figure~\ref{fig:LEAB-geometry}.
\end{remark}

\begin{figure}[ht]
\centering
\begin{tikzpicture}[scale=7,>=stealth]




\draw[thick] (0,0) -- (0.5,0);

\draw[thick] (0.5,0) -- (0.5,0.867);

\draw[thick,domain=120:180,samples=100]
  plot ({1+cos(\x)}, {sin(\x)});

\node at (0.28,0.78) {$\Sigma$};
\node at (0.28,0.9) {Left map};

\draw[dashed] (0,0) -- (0.5,0.5);
\node[rotate=45] at (0.33,0.37) {$\Re z=\Im z$};


\node[
  fill=white,
  inner sep=2pt,
  align=center
] at (0.3,0.55)
{
$\displaystyle\frac{z+\bar z}{2z}$};

\node[
  fill=white,
  inner sep=2pt,
  align=center
] at (0.35,0.18)
{
$\displaystyle \frac{z-\bar z}{2z}$};

\end{tikzpicture}
\hspace{2cm}
\begin{tikzpicture}[scale=7,>=stealth]



\draw[thick] (0,0) -- (0.5,0);


\draw[thick] (0.5,0) -- (0.5,0.867);

\draw[thick,domain=120:180,samples=100]
  plot ({1+cos(\x)}, {sin(\x)});

\node at (0.28,0.78) {$\Sigma$};
\node at (0.28,0.9) {Right map};

\draw[dashed] (0.3,.7) -- (0.5,0.5);
\node[rotate=-45] at (0.37,0.55) {$1-\Re z=\Im z$};


\node[
  fill=white,
  inner sep=2pt,
  align=center
] at (0.42,0.7)
{\small
$\frac{z+\bar z-2}{2(z-1)}$};

\node[
  fill=white,
  inner sep=2pt,
  align=center
] at (0.25,0.25)
{
$\displaystyle \frac{z-\bar z}{2(z-1)}$};

\end{tikzpicture}
\caption{Piecewise definition of the left and right LEAB maps $W_L$ and $W_R$ on the normalized shape space $\Sigma$.}
\label{fig:LEAB-maps}
\end{figure}

\begin{remark}
    In contrast with what happens for bisection, trisection, and $n$-section refinements, $W_L$ and $W_R$ are not piecewise Möbius transformations, see \cite{PerdomoPlaza2012,PerdomoPlaza2013,PerdomoPlaza2014}. 
\end{remark}

\section{Discussion}
The geometric description of the longest-edge altitude bisection (LEAB) developed in Section~2 shows that the refinement induces an extreme form of rigidity at the level of triangle shapes: in a single step, the entire normalized shape space~$\Sigma$ collapses onto the geodesic~$\Gamma$ of right triangles, and all subsequent refinements take place within this one-parameter family of fixed points. This behavior stands in sharp contrast with LEB, whose induced dynamics involves several invariant similarity classes and nontrivial refinement orbits, as described in the hyperbolic shape-space framework of Perdomo and Plaza~\cite{PerdomoPlaza2012,PerdomoPlaza2014}.

\begin{figure}[h]
    \centering
    \begin{tikzpicture}[scale=3]
    \coordinate (A) at (0,0);
\coordinate (B) at (1,0);
\coordinate (C) at ({0.5},{sqrt(3)/2});
\draw[thick] (0,0) -- (1,0) -- ({0.5},{sqrt(3)/2}) -- cycle;
\draw[dashed] ({0.5},{sqrt(3)/2}) -- ({0.5},0);
\node[below] at ($(A)!0.5!(B)$) {$l$};

\node[left]  at ($(A)!0.5!(C)$) {$l$};

\node[right] at ($(B)!0.5!(C)$) {$l$};
\end{tikzpicture}
\hspace{2cm}
\begin{tikzpicture}[scale=4]

\coordinate (A) at (0,0);
\coordinate (B) at (1,0);
\coordinate (C) at (0.3,{sqrt(0.21)});

\coordinate (H) at ($(A)!(C)!(B)$);

\draw[thick] (A) -- (B) -- (C) -- cycle;

\draw[thick,dashed] (C) -- (H);

\draw (A) -- (C)
  node[midway,sloped,above] {$l\sin\alpha$};

\draw (B) -- (C)
  node[midway,sloped,above] {$l\cos\alpha$};

\draw (A) -- (B)
  node[midway,below] {$l$};



\end{tikzpicture}
    \caption{Illustration of the LEAB process on an equilateral and a right triangle.}
    \label{fig:placeholder}
\end{figure}

From a metric point of view, LEAB exhibits a two-phase behavior that directly reflects this geometric collapse. In the first refinement step, no uniform diameter contraction occurs in general. 
Similar regularization effects without immediate contraction are also observed in  other longest-edge based schemes and have been noted in early analyses of refinement processes~\cite{RosenbergStenger1975,Stynes1980}.

After this initial step during which no diameter reduction may occur, see Figure \ref{fig:placeholder}, all triangles are right triangles whose longest edge is always the hypotenuse. Theorem~\ref{thm:two-sided} provides two-sided bounds for the diameter reduction when applying LEAB. Such explicit upper and lower bounds on element sizes are consistent with classical angle-based mesh regularity theory~\cite{BabuskaAziz1976} and with similar estimates arising in the context of graded and anisotropic meshes~\cite{Apel1999,BankSmith1997}.

\begin{theorem}[Two-sided diameter bounds for LEAB]\label{thm:two-sided}
Let $\{\mathcal T_k\}_{k\ge0}$ be the triangulations generated by successive global
applications of the longest-edge altitude bisection (LEAB), starting from an initial
triangulation $\mathcal T_0$ with minimal interior angle $\alpha_0>0$.
Let
\[
c=\min\{\operatorname{diam}(T):T\in\mathcal T_1\},\qquad
C=\max\{\operatorname{diam}(T):T\in\mathcal T_1\},
\]
and let $\alpha_1\ge\tfrac12\alpha_0$ denote the minimal interior angle in $\mathcal T_1$.
Then for every $k\ge1$ and every $T\in\mathcal T_k$,
\[
c\,(\sin\alpha_1)^{k-1}
\;\le\;
\operatorname{diam}(T)
\;\le\;
C\,(\cos\alpha_1)^{k-1}.
\]
In particular, $\{\mathcal T_k\}$ is uniformly refining, while the diameters of elements
within a fixed level $\mathcal T_k$ may differ by arbitrarily large factors when
$\alpha_0$ is small.
\end{theorem}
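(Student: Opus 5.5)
The plan is to reduce everything to a one-step statement about right triangles and then induct on $k$. By the theorem of Section~2 (the ray--geodesic characterization), every element of $\mathcal T_1$ is a right triangle, since its normalized shape lies on $\Gamma$. In a right triangle the longest edge is the hypotenuse, so LEAB drops the altitude from the right-angle vertex onto the hypotenuse. This is the classical configuration in which both children are similar to the parent. Indeed, if $T$ has hypotenuse $l$ and acute angles $\alpha\le\beta=\tfrac{\pi}{2}-\alpha$, the two children are right triangles with acute angles $\{\alpha,\beta\}$. Their hypotenuses are the legs of $T$, namely $l\sin\alpha$ and $l\cos\alpha$. In shape-space language, every point of $\Gamma$ is fixed by $W_L$ and $W_R$.

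The first step is to record two consequences of this similarity. First, the multiset of angles never changes after level~$1$: every $T\in\mathcal T_k$ with $k\ge1$ is similar to some element of $\mathcal T_1$. Hence its smaller acute angle $\alpha$ satisfies $\alpha_1\le\alpha\le\tfrac{\pi}{4}$. Second, since the diameter of a right triangle is its hypotenuse, each child $T'$ of $T$ satisfies
\[
\operatorname{diam}(T')\in\{\operatorname{diam}(T)\sin\alpha,\ \operatorname{diam}(T)\cos\alpha\}.
\]
Because $\alpha\in[\alpha_1,\tfrac{\pi}{4}]$, monotonicity of $\sin$ and $\cos$ on $[0,\tfrac{\pi}{2}]$ gives $\sin\alpha_1\le\sin\alpha\le\cos\alpha\le\cos\alpha_1$. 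A straightforward induction on $k$, with base case $c\le\operatorname{diam}(T)\le C$ for $T\in\mathcal T_1$, then yields the two-sided bound. Uniform refinement follows because $\cos\alpha_1<1$, as $\alpha_1>0$.

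Next I would justify the side claim $\alpha_1\ge\tfrac12\alpha_0$. Let a triangle of $\mathcal T_0$ have angles $A,B$ at the ends of its longest edge and the largest angle $C$ opposite it. Both $A$ and $B$ are acute, since each is at most $C$ and at most one angle can be non-acute; hence the foot of the altitude lies inside the longest edge. The children's angles are then $A$, $B$, $\tfrac{\pi}{2}$, $\tfrac{\pi}{2}-A$ and $\tfrac{\pi}{2}-B$. From $A\le C$ and $A+C=\pi-B$ we get $A\le\tfrac{\pi}{2}-\tfrac{B}{2}$, so $\tfrac{\pi}{2}-A\ge \tfrac{B}{2}\ge\tfrac{\alpha_0}{2}$. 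The same argument gives $\tfrac{\pi}{2}-B\ge\tfrac{\alpha_0}{2}$. The angles $A$ and $B$ are themselves at least $\alpha_0$.

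For the final remark on heterogeneity, I would exhibit descendants of a single right triangle with small angle $\alpha$. Following always the $\cos$-child versus always the $\sin$-child gives a diameter ratio of $(\cot\alpha)^{k-1}$ at level $k$, which is unbounded when $\alpha$ (hence $\alpha_0$) is small. There is no serious obstacle here. The only delicate point is the bookkeeping that $\alpha_1$ remains the minimal angle at all levels $k\ge1$. This follows from the similarity of children to parents on $\Gamma$, which I will state explicitly.
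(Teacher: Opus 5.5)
Your proposal is correct and follows the same route as the paper. After the first, regularizing step every element is a right triangle. LEAB then splits each right triangle into two children similar to it, with hypotenuses $l\sin\alpha$ and $l\cos\alpha$, and induction on $k$ gives the two-sided bound.

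Your version is more careful than the paper's in two places. First, you use each element's own smallest angle $\alpha\in[\alpha_1,\tfrac{\pi}{4}]$ together with the monotonicity of $\sin$ and $\cos$, whereas the paper writes the contraction factors directly as $\sin\alpha_1$ and $\cos\alpha_1$. Second, you prove $\alpha_1\ge\tfrac12\alpha_0$, which the paper only asserts and illustrates with the equilateral triangle.
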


\begin{proof}
The first refinement step does not produce a uniform diameter contraction in general.
For instance, as shown in Figure~\ref{fig:placeholder} (left), an equilateral triangle is subdivided into two
congruent right triangles whose diameters coincide with that of the parent, while the
smallest interior angle is reduced by a factor two. This step should therefore be
viewed as a geometric regularization.

After this initial step, all triangles in $\mathcal T_1$ are right triangles whose
longest edge is the hypotenuse. As illustrated in Figure~\ref{fig:placeholder} (right), dropping the altitude
to the hypotenuse subdivides such a triangle into two right triangles similar to the
parent. If $\alpha_1$ denotes the smallest interior angle, the hypotenuse lengths of the
children are reduced by the factors $\cos\alpha_1$ and $\sin\alpha_1$. Repeating this
argument along the refinement process yields the stated two-sided bounds.
\end{proof}

\begin{remark}
The bounds in Theorem~3.1 also reveal an intrinsic limitation of LEAB. When the minimal
angle $\alpha_0$ is small, the contraction factor $\cos\alpha_1$ is close to one, leading
to slow decay of the largest elements, while $\sin\alpha_1$ may be very small, causing
rapid decay of the smallest ones. As a result, LEAB can generate highly heterogeneous
triangulations, even though interior angles remain uniformly bounded from below after the
first refinement step. This contrasts with longest-edge bisection, where refinement is
distributed more evenly among similarity classes and the shape of elements seems to be improving; see, for instance,
\cite{Rivara1984,PerdomoPlaza2014,kalmanovich2026stability}.
\end{remark}

Another important practical aspect is that LEAB does not, in general, produce conforming triangulations. The position of the altitude foot on the longest edge depends on the individual geometry of each triangle and typically does not coincide with subdivision points introduced in neighboring elements. Consequently, local refinement by LEAB may lead to hanging nodes unless additional closure or refinement propagation rules are enforced. This issue is well known for geometry-driven and longest-edge based refinement schemes and has been extensively studied in the context of adaptive mesh refinement; see, for example, Rivara~\cite{Rivara1984}, Mitchell~\cite{Mitchell1991}, and Traxler~\cite{Traxler1997}.

While the lack of conformity and the potential heterogeneity of element sizes may limit the direct applicability of LEAB as a refinement strategy for standard finite element methods -- where conformity and balanced grading are often essential -- these features do not constitute intrinsic drawbacks of the refinement itself. In other contexts, such as adaptive subdivision methods for nonlinear root finding, continuation algorithms, or geometric approximation procedures, the rigidity and transparency of the LEAB dynamics may be advantageous. In such settings, refinement strategies based on explicit geometric control rather than conformity have been successfully employed; see, for example, Allgower and Georg~\cite{AllgowerGeorg1990}.

From this perspective, LEAB is best understood not as a competitor to classical longest-edge bisection, but as a conceptual limit case. Its finite-time collapse in shape space illustrates how hyperbolic shape-space techniques can be used to understand not only complex refinement dynamics, but also refinement schemes whose induced dynamics degenerate in a highly structured manner. This makes LEAB a useful reference example for exploring the interplay between geometric refinement rules and the global dynamics they induce on spaces of triangle shapes.

\bibliographystyle{abbrv}
\bibliography{references}

@article{BabuskaAziz1976,
  author  = {I. Babu{\v{s}}ka and A. K. Aziz},
  title   = {On the angle condition in the finite element method},
  journal = {SIAM Journal on Numerical Analysis},
  volume  = {13},
  number  = {2},
  year    = {1976},
  pages   = {214--226}
}

@article{RosenbergStenger1975,
  author  = {I. G. Rosenberg and F. Stenger},
  title   = {A lower bound on the angles of triangles constructed by bisecting the longest side},
  journal = {Mathematics of Computation},
  volume  = {29},
  year    = {1975},
  pages   = {390--395}
}

@article{Rivara1984,
  author  = {M. C. Rivara},
  title   = {Mesh refinement processes based on the generalized bisection of simplices},
  journal = {SIAM Journal on Numerical Analysis},
  volume  = {21},
  year    = {1984},
  pages   = {604--613}
}

@article{Stynes1980,
  author  = {M. Stynes},
  title   = {On faster convergence of the bisection method for all triangles},
  journal = {Mathematics of Computation},
  volume  = {35},
  year    = {1980},
  pages   = {1195--1201}
}

@article{PerdomoPlaza2014,
  author  = {F. Perdomo and A. Plaza},
  title   = {Properties of triangulations obtained by the longest-edge bisection},
  journal = {Central European Journal of Mathematics},
  volume  = {12},
  number  = {12},
  year    = {2014},
  pages   = {1796--1810}
}

@article{PerdomoPlaza2012,
  author  = {F. Perdomo and A. Plaza},
  title   = {A new proof of the degeneracy property of the longest-edge $n$-section refinement scheme for triangular meshes},
  journal = {Applied Mathematics and Computation},
  volume  = {219},
  year    = {2012},
  pages   = {2342--2344}
}

@article{PerdomoPlaza2013,
  author  = {F. Perdomo and A. Plaza},
  title   = {Proving the non-degeneracy of the longest-edge trisection by a space of triangular shapes with hyperbolic metric},
  journal = {Applied Mathematics and Computation},
  volume  = {221},
  year    = {2013},
  pages   = {424--432}
}

@article{Mitchell1991,
  author  = {Mitchell, W. F.},
  title   = {Adaptive refinement for arbitrary finite-element spaces with hierarchical bases},
  journal = {Journal of Computational and Applied Mathematics},
  volume  = {36},
  number  = {1},
  pages   = {65--78},
  year    = {1991},
  doi     = {10.1016/0377-0427(91)90228-R}
}

@article{Traxler1997,
  author  = {Traxler, C. T.},
  title   = {An algorithm for adaptive mesh refinement in two dimensions},
  journal = {Computing},
  volume  = {59},
  number  = {2},
  pages   = {115--137},
  year    = {1997},
  doi     = {10.1007/BF02684465}
}

@book{Apel1999,
  author    = {Apel, Thomas},
  title     = {Anisotropic Finite Elements: Local Estimates and Applications},
  publisher = {B. G. Teubner},
  address   = {Stuttgart},
  year      = {1999},
  isbn      = {978-3519026129}
}

@article{BankSmith1997,
  author  = {Bank, Randolph E. and Smith, R. K.},
  title   = {A posteriori error estimates based on hierarchical bases},
  journal = {SIAM Journal on Numerical Analysis},
  volume  = {30},
  number  = {4},
  pages   = {921--935},
  year    = {1993},
  doi     = {10.1137/0730047}
}

@book{AllgowerGeorg1990,
  author    = {Allgower, Eugene L. and Georg, Kurt},
  title     = {Numerical Continuation Methods: An Introduction},
  publisher = {Springer},
  address   = {Berlin},
  year      = {1990},
  series    = {Springer Series in Computational Mathematics},
  volume    = {13},
  doi       = {10.1007/978-3-642-61257-2}
}

@book{anderson2005hyperbolic,
  title={Hyperbolic geometry},
  author={Anderson, James},
  year={2005},
  publisher={Springer Science \& Business Media}
}

@article{kalmanovich2026stability,
  title={On the stability, complexity, and distribution of similarity classes of the longest edge bisection process for triangles},
  author={Kalmanovich, Daniel and Solomon, Yaar},
  journal={arXiv preprint arXiv:2601.13663},
  year={2026}
}
\end{document}